\newtheorem{theorem}{Theorem}[section]
\newtheorem{lemma}[theorem]{Lemma}
\newtheorem{problem}[theorem]{Problem}
\newtheorem{corollary}[theorem]{Corollary}
\theoremstyle{definition}
\theoremstyle{remark}
\newtheorem{remark}[theorem]{Remark}
\numberwithin{equation}{section}
\begin{document}

\title[Tightness of spaces of measures]{Countable tightness in the  spaces\\ of regular probability measures}

\author[G.\ Plebanek]{Grzegorz Plebanek}
\address{Instytut Matematyczny, Uniwersytet Wroc\l awski}
\email{grzes@math.uni.wroc.pl}

\author[D.\ Sobota]{Damian Sobota}
\address{Warsaw Center of Mathematics and Computer Science\newline \indent Institute of Mathematics and Computer Science, Wroc\l aw University of Technology}
\email{damian.sobota@pwr.edu.pl}
\thanks{G.\ Plebanek was partially supported by NCN grant (2014-2017).} 
\thanks{D.\ Sobota was partially supported by Warsaw Center of Mathematics and Computer Science.}



\subjclass[2010]{Primary 46E15, 46E27, 54C35.}

\begin{abstract}
We prove that if $K$ is a compact space and the space $P(K\times K)$ of regular probability measures on $K\times K$ has countable tightness in its $weak^*$ topology, then
$L_1(\mu)$ is separable for every $\mu\in P(K)$. It has been known that such a result is a consequence of Martin's axiom MA$(\omega_1)$.
 Our theorem has several consequences; in particular, it generalizes a theorem due to Bourgain and Todor\v{c}evi\'c
on measures on Rosenthal compacta.
\end{abstract}

\maketitle

\newcommand{\con}{\mathfrak c}
\newcommand{\eps}{\varepsilon}
\newcommand{\alg}{\mathfrak A}
\newcommand{\algb}{\mathfrak B}
\newcommand{\algc}{\mathfrak C}
\newcommand{\ma}{\mathfrak M}
\newcommand{\pa}{\mathfrak P}
\newcommand{\BB}{\protect{\mathcal B}}
\newcommand{\AAA}{\mathcal A}
\newcommand{\CC}{{\mathcal C}}
\newcommand{\FF}{{\mathcal F}}
\newcommand{\GG}{{\mathcal G}}
\newcommand{\LL}{{\mathcal L}}
\newcommand{\UU}{{\mathcal U}}
\newcommand{\VV}{{\mathcal V}}
\newcommand{\HH}{{\mathcal H}}
\newcommand{\DD}{{\mathcal D}}
\newcommand{\RR}{\protect{\mathcal R}}
\newcommand{\ide}{\mathcal N}
\newcommand{\btu}{\bigtriangleup}
\newcommand{\hra}{\hookrightarrow}
\newcommand{\ve}{\vee}
\newcommand{\we}{\cdot}
\newcommand{\de}{\protect{\rm{\; d}}}
\newcommand{\er}{\mathbb R}
\newcommand{\qu}{\mathbb Q}
\newcommand{\supp}{{\rm supp} }
\newcommand{\card}{{\rm card} }
\newcommand{\wn}{{\rm int} }
\newcommand{\ult}{{\rm ULT}}
\newcommand{\vf}{\varphi}
\newcommand{\osc}{{\rm osc}}
\newcommand{\ol}{\overline}
\newcommand{\me}{\protect{\bf v}}
\newcommand{\ex}{\protect{\bf x}}
\newcommand{\stevo}{Todor\v{c}evi\'c}
\newcommand{\cc}{\protect{\mathfrak C}}
\newcommand{\scc}{\protect{\mathfrak C^*}}
\newcommand{\lra}{\longrightarrow}
\newcommand{\sm}{\setminus}
\newcommand{\uhr}{\upharpoonright}

\newcommand{\sub}{\subseteq}
\newcommand{\ms}{$(M^*)$}
\newcommand{\m}{$(M)$}
\newcommand{\MA}{MA$(\omega_1)$}
\newcommand{\clop}{\protect{\rm Clop} }
\section{Introduction}

The \textit{tightness} of a topological space $X$, mentioned in the title and denoted here by $\tau(X)$, is the least cardinal number
such that for every $A\sub X$ and $x\in\ol{A}$ there is a set $A_0\sub A$ with $|A_0|\le \tau(X)$
and such that $x\in\ol{A_0}$.

In the sequel, $K$ always stands for a compact Hausdorff topological space.
 By $C(K)$ we denote the Banach
space of continuous functions on $K$ equipped with the supremum norm. As usual, the conjugate space
$C(K)^{*}$ is identified with $M(K)$, the space of signed Radon measures
on $K$ of finite variation. We denote by $P(K)$ the space of probability
Radon measures on $K$ and  consider $P(K)$ endowed with the $weak^*$ topology inherited from $C(K)^*$.

In the present paper we focus on the following problem.

\begin{problem}\label{mainproblem}
Suppose that $P(K)$ has countable tightness. Does this imply that every $\mu\in P(K)$ has countable Maharam type (that is,
$L_1(\mu)$ is separable)?
\end{problem}

There are several reasons why such a problem seems to be quite interesting and delicate. We now briefly outline some aspects of \ref{mainproblem}
and postpone a more detailed discussion to section \ref{section:5}.

Assuming Martin's axiom MA$(\omega_1)$,  Fremlin \cite{Fr97} showed that if a compact space $K$ admits a measure of uncountable type then $K$ can be continuously mapped onto
$[0,1]^{\omega_1}$, so in particular $K$ must have uncountable tightness. Since $P(K)$ contains a subspace homeomorhic to $K$ it follows that Problem \ref{mainproblem}
has a positive solution under  MA$(\omega_1)$.

Talagrand \cite{Ta81} showed that if $K$ admits a measure of type $\omega_2$ then $P(K)$ can be continuously mapped onto $[0,1]^{\omega_2}$. Thus the following analogue
of \ref{mainproblem} holds true:  if  $\tau(P(K)) \le \omega_1$ then every measure $\mu\in P(K)$ is of type $\le\omega_1$.

Pol \cite{Po82} investigated whether the following duality holds: $P(K)$ has countable tightness if and only if the Banach space $C(K)$ has property (C) of Corson.
By the main result of \cite{FPR} in order to verify such a duality it is sufficient to prove that Problem \ref{mainproblem} has a positive solution.

If $K$ is Rosenthal compact (i.e.\ $K$ can be represented as a pointwise compact space of Baire-one functions on some Polish space),
then every $\mu\in P(K)$ has countable type. This fact,  announced in Bourgain \cite{Bo76}, was proved by Todor\v{c}evi\'c \cite{To99} basing on
properties of Rosenthal compacta in forcing extensions; see also Marciszewski and Plebanek \cite{MP12}.  Problem \ref{mainproblem} would be a generalization of that result since
for Rosenthal compact $K$, the space $P(K)$ is also Rosenthal compact and consequently has countable tightness.

We shall prove below that for every compact space $K$, if $P(K\times K)$ has countable tightness then $K$ carries only measures of countable type. This does not solve
Problem \ref{mainproblem} completely, but seems to be a substantial step forward. After recalling basic definitions and facts in section \ref{section:2}, we prove in section \ref{section:3} some auxiliary results on measures on product spaces. In section \ref{section:4} we prove our main result --- Theorem \ref{mainthm}.
In the final section we present  some consequences of Theorem \ref{mainthm},  and state some  open problems related to our main topic.

\section{Preliminaries} \label{section:2}

For a given space $K$, $Bor(K)$ stands for the $\sigma$-algebra of all Borel subsets of $K$. Every $\mu\in P(K)$  is treated as a Borel measure on $K$, which is inner regular with respect to
compact sets. Recall that the $weak^*$ topology on $P(K)$ is the weakest one making a function $\mu\mapsto \int_K g\;{\rm d}\mu$ continuous for every $g\in C(K)$.

\begin{remark}\label{remark} Take an open set  $U\sub K$ and a closed set $F\sub K$. 
 Note that  the set of the form 
 $V_{U,a}=\{\nu\in P(K):  \nu(U)>a\}$
  is $weak^*$ open in $P(K)$ for every $a\in\er$. 
  
 Let  $M\sub P(K)$ and $\nu_0\in\overline{M}$. It follows that
 
 \begin{itemize}
 \item[(a)] $\nu_0(V)\le a$ provided $\nu(V)\le a$ for every $\nu\in M$;
 \item[(b)] $\nu_0(F)\ge a$ provided $\nu(F)\ge a$ for every $\nu\in M$.
 \end{itemize}
 \end{remark}

The \textit{Maharam type} of a measure $\mu\in P(K)$ can be defined as  the least cardinal number $\kappa$ for which there exists a family $\mathcal{C}\sub Bor(K)$ of  cardinality
$\kappa$ and such that the condition $\inf\{\mu(B\triangle C):\ C\in\mathcal{C}\}=0$ is satisfied for every $B\in Bor(K)$. Equivalently, $\mu$ has Maharam type $\kappa$ if the space of all $\mu$-integrable functions $L_1(\mu)$ has density $\kappa$ as a Banach space.
A measure $\mu\in P(K)$ is \textit{homogeneous} if its type is the same on every set $B\in Bor(K)$ of positive measure.

The following fact is well-known, see e.g.\ Plebanek \cite{Pl95}, Lemma 2, or Fremlin \cite{Fr97}, Introduction.

\begin{lemma}\label{homogeneous}
If a compact space $K$ carries a regular measure of uncountable type, then there is $\mu\in P(K)$ which is homogeneous of type $\omega_1$.
\end{lemma}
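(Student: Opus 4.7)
The plan is to apply Maharam's classification theorem for measure algebras in two stages.

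First, I would use the Maharam decomposition to reduce to a homogeneous measure. Starting from $\nu\in P(K)$ of uncountable Maharam type, the decomposition of $\mathfrak{A}(\nu)$ yields an at-most-countable Borel partition of $K$ (modulo $\nu$-null) into pieces $B_i$ on which $\nu|_{B_i}$ is homogeneous of some type $\kappa_i$. Because a countable sum of countably generated measure algebras is itself countably generated, the uncountability of the type of $\nu$ forces some $\kappa_i\ge\omega_1$ with $\nu(B_i)>0$; renormalizing yields $\nu_1\in P(K)$, concentrated on $B_i$, which is homogeneous of some type $\kappa\ge\omega_1$. If $\kappa=\omega_1$ the lemma is proved with $\mu=\nu_1$.

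Assume therefore $\kappa>\omega_1$. By Maharam's theorem, $\mathfrak{A}(\nu_1)$ is measure-isomorphic to the measure algebra $\mathfrak{A}(\lambda_\kappa)$ of Haar measure on $\{0,1\}^\kappa$. The projection $\{0,1\}^\kappa\to\{0,1\}^{\omega_1}$ onto the first $\omega_1$ coordinates induces a measure-preserving Boolean embedding $\mathfrak{A}(\lambda_{\omega_1})\hookrightarrow\mathfrak{A}(\lambda_\kappa)\cong\mathfrak{A}(\nu_1)$, whose image $\mathfrak{C}\subseteq\mathfrak{A}(\nu_1)$ is a sub-measure-algebra homogeneous of type exactly $\omega_1$. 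Lifting the natural generators back to $Bor(K)$ produces a family $(A_\alpha)_{\alpha<\omega_1}$ of Borel sets with $\nu_1(A_\alpha)=1/2$ that are stochastically independent with respect to $\nu_1$ and whose classes generate $\mathfrak{C}$.

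\textbf{The main obstacle} is to realize $\mathfrak{C}$ as the measure algebra of a genuine Radon probability measure $\mu$ on $K$: $\nu_1$ itself still has type $\kappa>\omega_1$, so simply restricting the $\sigma$-algebra to the preimage of $\mathfrak{C}$ leaves the category of Radon measures on $K$, while the push-forward via the Baire-measurable map $x\mapsto(1_{A_\alpha}(x))_{\alpha<\omega_1}$ lands in $\{0,1\}^{\omega_1}$ rather than in $K$. My strategy here is to combine the inner regularity of $\nu_1$ (approximating each $A_\alpha$ and its complement from inside by compact sets) with the weak$^*$ compactness of $P(K)$, producing $\mu$ as a suitable Radon probability measure on a compact subset of $K$ whose ``finite-dimensional marginals'' with respect to the $(A_\alpha)$ agree with those of $\nu_1$. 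The independence of the $(A_\alpha)$ then identifies $\mathfrak{A}(\mu)$ with $\mathfrak{A}(\lambda_{\omega_1})$, making $\mu$ homogeneous of Maharam type $\omega_1$. This final passage from the abstract sub-algebra $\mathfrak{C}$ to an actual Radon measure is the technically delicate step, and is carried out in detail in the cited references of Plebanek and Fremlin.
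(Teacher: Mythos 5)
The first stage of your plan --- the Maharam decomposition, passing to a homogeneous $\nu_1$ of type $\kappa\ge\omega_1$, and the observation that the case $\kappa=\omega_1$ is then finished --- is correct, but it is also the routine part. Bear in mind that the paper gives no proof of Lemma \ref{homogeneous} at all: it is quoted as well known, with references to \cite{Pl95} (Lemma 2) and \cite{Fr97} (Introduction), so the entire substance of the lemma is exactly the case $\kappa>\omega_1$ that you postpone.

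And there your argument has a genuine gap: nothing in the proposed construction bounds the Maharam type of the new measure $\mu$ from above, nor yields homogeneity. Arranging that a Radon measure $\mu$ on $K$ has the same finite-dimensional marginals as $\nu_1$ with respect to the lifted independent family $(A_\alpha)_{\alpha<\omega_1}$ shows only that $\mathfrak{A}(\mu)$ contains a copy of $\mathfrak{A}(\lambda_{\omega_1})$ as a closed subalgebra, i.e.\ that the type of $\mu$ is \emph{at least} $\omega_1$; the sentence ``the independence of the $(A_\alpha)$ then identifies $\mathfrak{A}(\mu)$ with $\mathfrak{A}(\lambda_{\omega_1})$'' is a non sequitur. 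Indeed, $\nu_1$ itself has exactly those marginals and has type $\kappa>\omega_1$, so the inference fails already for the measure you start from. (A secondary worry: a weak$^*$ cluster-point argument does not by itself preserve values on the Borel sets $A_\alpha$ --- weak$^*$ limits give only the one-sided estimates of Remark \ref{remark} for open and closed sets, which is precisely the difficulty the paper has to fight in the proof of Theorem \ref{mainthm}.) Since you then defer ``the technically delicate step'' to the references, the key point --- that $K$ itself carries a homogeneous Radon measure of type exactly $\omega_1$ --- is not proved; either cite \cite{Pl95} or \cite{Fr97} outright, as the paper does, or give a construction that genuinely controls the type from above. It is worth noting that for the way the lemma is used later (Lemma \ref{indeplemma}, Corollary \ref{indepcor}) homogeneity of some uncountable type would suffice, but that observation does not establish the lemma as stated.
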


Let $\mu\in P(K)$ and denote its measure algebra by $Bor(K)/_{\mu=0}$. For $B\in Bor(K)$ let  $B^\bullet$ stand for the corresponding element of $Bor(K)/_{\mu=0}$.
We shall use the following standard result.

\begin{lemma}\label{indeplemma}
Let $\mu\in P(K)$ be a homogeneous measure of type $\omega_1$ and let $\mathcal{C}$ be a countable family of Borel subsets of $K$.
Then there is $B\in Bor(K)$ such that $\mu(B)=\frac{1}{2}$ and $B$ is $\mu$-independent of $\mathcal{C}$, i.e. $\mu(B\cap C)=\frac{1}{2}\mu(C)$ for every $C\in\mathcal{C}$.
\end{lemma}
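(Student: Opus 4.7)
My plan is to reduce to the standard product measure on $\{0,1\}^{\omega_1}$ by invoking Maharam's classification theorem. Since $\mu$ is homogeneous of type $\omega_1$, there is a measure-preserving isomorphism $\Phi$ from the measure algebra $Bor(K)/_{\mu=0}$ onto the measure algebra of $(\{0,1\}^{\omega_1},\lambda)$, where $\lambda$ is the standard product probability measure. The strategy is to construct the desired element on the Cantor-cube side, where independence can be read off directly from coordinates, and then pull it back to $K$.

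For each $n$, the class $\Phi(C_n^\bullet)$ is represented by a Borel subset of $\{0,1\}^{\omega_1}$. A standard fact about product probability spaces is that every Borel subset is, modulo a $\lambda$-null set, measurable with respect to countably many coordinates; hence there is a countable $I_n \sub \omega_1$ with $\Phi(C_n^\bullet)$ in the $\sigma$-subalgebra generated by $\{\pi_\alpha : \alpha \in I_n\}$. Set $I = \bigcup_n I_n$ (still countable), pick any $\beta \in \omega_1 \sm I$, and let $E = \{x \in \{0,1\}^{\omega_1} : x(\beta) = 1\}$; this cylinder has $\lambda$-measure $\tfrac12$.

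Since $E$ depends only on the coordinate $\beta \notin I$, it is stochastically independent of every element of the $\sigma$-algebra generated by $\{\pi_\alpha : \alpha \in I\}$; in particular $\lambda(E \cap \Phi(C_n^\bullet)) = \tfrac12\,\lambda(\Phi(C_n^\bullet))$ for each $n$. Taking any Borel representative $B \sub K$ of $\Phi^{-1}(E^\bullet)$ then yields $\mu(B) = \tfrac12$ and $\mu(B \cap C) = \tfrac12 \mu(C)$ for every $C \in \mathcal{C}$, as required.

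The main conceptual input is Maharam's theorem; everything else is a routine observation about product measures. The only subtlety is that $\Phi$ acts on equivalence classes rather than on sets, so the whole construction takes place in the measure algebra and a Borel representative is extracted only at the end. This is harmless because both $\mu(B) = \tfrac12$ and $\mu(B \cap C) = \tfrac12 \mu(C)$ depend solely on the class $B^\bullet$.
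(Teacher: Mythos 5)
Your proposal is correct and follows essentially the same route as the paper: apply Maharam's theorem to transfer to the measure algebra of the product measure on $2^{\omega_1}$, use the fact that each relevant class is determined (mod null) by countably many coordinates, pick a fresh coordinate, and pull the corresponding half-measure cylinder back to a Borel set in $K$. The only cosmetic difference is that the paper cites Fremlin explicitly for the countable-coordinate fact, which you invoke as a standard property of product measures.
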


\begin{proof}
By the Maharam Theorem (see Maharam \cite{Ma42} or Fremlin \cite{Fr89}) there is a measure-preserving isomorphism of measure algebras
$\varphi:Bor(K)/_{\mu=0}\to \alg$, where $\alg$ is the measure algebra of the usual product measure $\lambda$ on $2^{\omega_1}$.
Let $\mathcal{C}^\bullet=\{C^\bullet:\ C\in\mathcal{C}\}$.

 Recall that for every $a\in\alg$ there is a set $A\sub 2^{\omega_1}$ depending on coordinates in a countable set $I_A\sub \omega_1$
 such that $A^\bullet=a$, see Fremlin \cite{Fr02}, section 8.
 Therefore, there is a countable set $I\sub \omega_1$ such that for every $C\in\CC$ there is $A\sub 2^{\omega_1}$ such that $A=A'\times 2^{\omega_1\sm I}$ for some
 $A'\in Bor(2^I)$ and $\vf(C^\bullet)=A^\bullet$.

 Take $\xi<\omega_1$ such that $\xi>\sup I$, and  $B\in Bor(K)$ for which $B^\bullet=\varphi^{-1}(c_\xi^\bullet)$, where $c_\xi=\{x\in 2^{\omega_1}:\ x(\xi)=0\}$.
 Then $B$ has the required property.
  \end{proof}

The following corollary can be easily obtained using Lemma \ref{indeplemma} and regularity of $\mu$.

\begin{corollary}\label{indepcor} Let $\mu\in P(K)$ be a homogeneous measure of type $\omega_1$. For every $\varepsilon>0$ there exist sequences $\langle B_\xi\in Bor(K): \xi<\omega_1\rangle$ and $\langle U_\xi\in Open(K): \xi<\omega_1\rangle$ such that
\begin{itemize}
	\item[(i)] $\mu(B_\xi)=\frac{1}{2}$,
	\item[(ii)] $B_\xi\subseteq U_\xi$ and $\mu(U_\xi\setminus B_\xi)<\varepsilon$,
	\item[(iii)] $B_\xi$ is $\mu$-independent of the algebra generated by $\mathcal{C}_\xi=\{B_\eta, U_\eta:\ \eta<\xi\}$.
\end{itemize}
\end{corollary}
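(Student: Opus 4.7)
The plan is a transfinite recursion on $\xi<\omega_1$. At stage $\xi$, assuming $B_\eta, U_\eta$ for all $\eta<\xi$ have already been constructed, I will produce $B_\xi$ and $U_\xi$ satisfying (i)--(iii). Since $\xi$ is itself countable, the family $\mathcal{C}_\xi=\{B_\eta,U_\eta:\eta<\xi\}$ is countable, and therefore so is the Boolean subalgebra $\mathcal{A}_\xi$ of $Bor(K)$ it generates.

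The key move will be to apply Lemma \ref{indeplemma} directly to $\mathcal{A}_\xi$, not merely to $\mathcal{C}_\xi$; this yields a Borel set $B_\xi$ with $\mu(B_\xi)=\frac{1}{2}$ that is $\mu$-independent of every member of $\mathcal{A}_\xi$, verifying (i) and (iii). Condition (ii) will then follow from outer regularity of the Radon probability measure $\mu$: there exists an open $U_\xi\supseteq B_\xi$ with $\mu(U_\xi\setminus B_\xi)<\varepsilon$. The recursion closes through all $\xi<\omega_1$.

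The one subtle point worth flagging is \emph{why} Lemma \ref{indeplemma} should be applied to the whole algebra $\mathcal{A}_\xi$ instead of to $\mathcal{C}_\xi$ itself. For a set $B$ with $\mu(B)=\frac{1}{2}$, $\mu$-independence from $C$ automatically gives independence from $K\setminus C$, because $\mu(B\cap(K\setminus C))=\mu(B)-\mu(B\cap C)=\frac{1}{2}\mu(K\setminus C)$; however, it does \emph{not} propagate to intersections, since $\mu(B\cap C_1\cap C_2)$ is not determined by $\mu(B\cap C_1)$ and $\mu(B\cap C_2)$ alone. Passing to the generated algebra, which remains countable, sidesteps this with no extra work.

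I do not anticipate any real obstacle: the corollary amounts to packaging Lemma \ref{indeplemma} together with outer regularity of $\mu$ into a form convenient for later use.
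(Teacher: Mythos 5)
Your proof is correct and follows exactly the route the paper intends (the paper only remarks that the corollary "can be easily obtained using Lemma \ref{indeplemma} and regularity of $\mu$"): transfinite recursion of length $\omega_1$, applying Lemma \ref{indeplemma} at stage $\xi$ to the still countable algebra generated by $\mathcal{C}_\xi$, and then choosing $U_\xi\supseteq B_\xi$ by outer regularity of the Radon measure $\mu$. Your remark on why one must pass to the generated algebra rather than the family $\mathcal{C}_\xi$ itself is a correct and worthwhile clarification of condition (iii).
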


\section{Measures on $K\times K$}\label{section:3}

In this section we consider a fixed homogeneous measure $\mu\in P(K)$ of type $\omega_1$. Given two algebras $\mathcal{A}$ and $\mathcal{B}$, we write
\[\mathcal{A}\otimes\mathcal{B}=alg(\{A\times B: A\in\mathcal{A},B\in\mathcal{B}\}),\]
for their product algebra;
here $alg(\cdot)$ denotes the algebra of sets generated by a given family. Let $\mathcal{R}$ denote the Borel rectangle algebra in $K\times K$, i.e.
\[\mathcal{R}=Bor(K)\otimes Bor(K).\]
The following notation is crucial for our considerations: given an algebra $\AAA\sub Bor(K)$, we write $P(\AAA\otimes\AAA,\mu)$ for the family of all finitely additive
probability measures $\nu$ on $ \AAA\otimes\AAA$ such that
\[\nu(A\times K)=\nu(K\times A)=\mu(A)\mbox{ for every } A\in\AAA.\]

By a result due to Marczewski and Ryll-Nardzewski \cite{MCRN}  every $\nu\in P(\mathcal{R},\mu)$ is automatically countably additive and can be
extended to a (regular) measure on the product $\sigma$-algebra $\sigma(Bor(K)\otimes Bor(K))$. In turn, such a measure can be extended to a regular measure
on $Bor(K\times K)$.  We outline below a relatively short argument for completeness (cf.\ Plebanek \cite{Pl89}, Theorem 4).

\begin{theorem} \label{rectextlemma}
Every $\nu\in P(\mathcal{R},\mu)$ can be extended to
a regular Borel measure on $K\times K$.
\end{theorem}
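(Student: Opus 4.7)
The plan is to reduce the problem to the classical Marczewski--Ryll-Nardzewski criterion for countable additivity and then invoke a standard Radon extension procedure. The key resource that makes everything work is that $\mu$ itself is inner regular with respect to compact sets, which propagates to inner regularity of $\nu$ with respect to compact rectangles in $K\times K$.

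First, I would verify that $\nu$ on $\mathcal{R}$ is inner regular with respect to the class $\mathcal{K}_0$ of finite unions of compact rectangles $L_1\times L_2$, with $L_i$ compact in $K$. For an elementary rectangle $A\times B \in \mathcal{R}$ and $\eps>0$, use inner regularity of $\mu$ to choose compact $L_1\sub A$, $L_2\sub B$ with $\mu(A\sm L_1), \mu(B\sm L_2)<\eps$. Since the marginals of $\nu$ equal $\mu$, we get
\[
\nu\bigl((A\times B)\sm(L_1\times L_2)\bigr)\le \nu((A\sm L_1)\times K) + \nu(K\times(B\sm L_2)) < 2\eps.
\]
A finite disjoint decomposition of an arbitrary $R\in\mathcal{R}$ into Borel rectangles then yields a compact rectangle approximation $L\sub R$ with $\nu(R\sm L)$ arbitrarily small.

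Second, observe that $\mathcal{K}_0$ is a compact class: any countable subfamily of $\mathcal{K}_0$ with the finite intersection property has nonempty intersection, since finite unions of compact sets in a compact space are compact. By Marczewski--Ryll-Nardzewski, any finitely additive probability on an algebra that is inner regular with respect to a compact subclass is automatically countably additive; applying this to $\nu$ and $\mathcal{K}_0$ yields countable additivity of $\nu$ on $\mathcal{R}$. Standard Carath\'eodory extension now produces a countably additive $\sigma(\mathcal{R})$-measure, still denoted $\nu$, and the inner approximations from step one pass to this extension, so $\nu$ remains inner regular on $\sigma(\mathcal{R})$ with respect to $\mathcal{K}_0$.

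Third, to obtain a genuine regular Borel measure on $K\times K$ I would define, for an arbitrary $E\sub K\times K$,
\[
\nu_*(E)=\sup\{\nu(L): L\in\mathcal{K}_0,\ L\sub E\},
\]
and show that $\nu_*$ restricted to $Bor(K\times K)$ is a Radon measure extending $\nu$. The point is that $\nu_*$ is additive on disjoint sets that can be separated by open sets (in particular on disjoint closed sets), is countably subadditive, and agrees with $\nu$ on $\sigma(\mathcal{R})$; the Radon property follows from its very definition. The class of sets on which $\nu_*$ is additive is a $\sigma$-algebra containing all closed subsets of $K\times K$ and hence all Borel sets.

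The main obstacle is the last step: when $K$ is not metrizable, $Bor(K\times K)$ strictly contains $\sigma(\mathcal{R})$, so one cannot stop after Carath\'eodory. Everything hinges on the fact that the compact class $\mathcal{K}_0$ already approximates $\nu$ from within; once that is in place, the Radon extension is a familiar construction (e.g.\ \cite{Pl89}, Theorem~4, to which the text points) and no subtlety peculiar to our setting arises.
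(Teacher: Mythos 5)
Your first two steps are fine and coincide with what the paper records before the theorem: the marginal estimate gives inner regularity of $\nu$ on the algebra $\mathcal{R}$ with respect to the class $\mathcal{K}_0$ of finite unions of compact rectangles, this class is compact, so Marczewski--Ryll-Nardzewski plus Carath\'eodory yield a countably additive extension to $\sigma(\mathcal{R})$. The gap is in the passage to $Bor(K\times K)$. It is not true that the extension to $\sigma(\mathcal{R})$ "remains inner regular with respect to $\mathcal{K}_0$": regularity with respect to a class passes to the generated $\sigma$-algebra only when the class is stable under countable intersections, and $\mathcal{K}_0$ is not. Concretely, take $K=[0,1]$, $\mu$ Lebesgue measure, and $\nu(A\times B)=\mu(A\cap B)$, the copy of Lebesgue measure sitting on the diagonal $\Delta$; this $\nu$ belongs to $P(\mathcal{R},\mu)$. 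Every compact rectangle contained in $\Delta$ is a single point and every compact rectangle contained in $\Delta^c$ is a product of disjoint compact sets, hence $\nu$-null; so your set function $\nu_*$ satisfies $\nu_*(\Delta)=\nu_*(\Delta^c)=0$ while $\nu_*(K\times K)=1$. Thus $\nu_*$ is not additive on $Bor(K\times K)$, does not agree with the Carath\'eodory extension on $\sigma(\mathcal{R})$ (which gives $\Delta$ measure $1$), and is not the desired Radon extension. The claim that "the class of sets on which $\nu_*$ is additive is a $\sigma$-algebra containing all closed sets" fails for the same reason.

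What is missing is precisely the non-trivial step of the paper's argument: before taking inner suprema one must enlarge the approximating lattice from closed (compact) rectangles to \emph{all} closed subsets of $K\times K$. The paper does this by invoking Bachman--Sultan to extend $\nu$ to a finitely additive measure on $alg(\mathcal{R}\cup\FF)$, $\FF$ the lattice of all closed sets, which is $\FF$-regular; compactness of $K\times K$ then gives continuity at $\emptyset$ and Carath\'eodory produces the regular Borel extension. Alternatively you could repair your route by noting that the extension to $\sigma(\mathcal{R})$ is inner regular with respect to the compact class of \emph{countable intersections} of members of $\mathcal{K}_0$ (arbitrary compact subsets of $K\times K$ lying in $\sigma(\mathcal{R})$), and then apply a Henry-type Radon extension theorem for measures inner regular with respect to compact sets; but some such extension theorem has to be cited or proved --- the step from $\sigma(\mathcal{R})$ to $Bor(K\times K)$ cannot be dismissed as a formality, and as written your construction of $\nu_*$ is incorrect.
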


\begin{proof}
Let $\LL$ denote the family of finite unions of rectangles $F\times F'$ where $F,F'\subset K$ are closed.
Using the fact that $\nu\in P(\mathcal{R},\mu)$, it is easy to see that $\nu$ is $\LL$-regular, i.e. for every $\varepsilon>0$ and $A\in\mathcal{R}$ there exists $L\in\mathcal{L}$ contained in $A$ and such that $\nu(A\setminus L)<\varepsilon$.

Let $\FF$ be the lattice of all closed subsets of $K\times K$.
By the main result from Bachman and Sultan \cite{BS80}, $\nu$ can be extended to an $\FF$-regular finitely additive measure $\nu'$ on $alg(\mathcal{R}\cup \FF)$.
By $\FF$-regularity and compactness, $\nu'$ is continuous from above at $\emptyset$ and the standard Caratheodory extension procedure gives an extension to a regular measure
on $\sigma(\RR)=Bor(K\times K)$.
\end{proof}

For a subset $B\subseteq K$, we use below the following notation: $B^0=B$ and $B^1=B^c=K\setminus B$. We now prove two lemmas concerning extensions of measures on finite algebras with fixed marginal distributions.

\begin{lemma}\label{extlemma}
If $\mathcal{A}$ is a finite subalgebra of $Bor(K)$ then every 
$\nu\in P(\mathcal{A}\otimes\AAA,\mu)$ can be extended to  $\widehat{\nu}\in P(\mathcal{R},\mu )$.
 \end{lemma}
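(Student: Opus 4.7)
The plan is to construct $\widehat{\nu}$ directly by exploiting the product structure within each atom of $\mathcal{A}\otimes\mathcal{A}$, using the fact that on a finite algebra a measure is determined by its values on atoms and that we have full freedom to extend inside each atomic rectangle. Let $A_1,\ldots,A_n$ be the atoms of $\mathcal{A}$, so that the atoms of $\mathcal{A}\otimes\mathcal{A}$ are precisely the rectangles $A_i\times A_j$, and $\nu$ is determined by the numbers $c_{ij}=\nu(A_i\times A_j)$. The marginal condition in the definition of $P(\mathcal{A}\otimes\mathcal{A},\mu)$ yields $\sum_j c_{ij}=\mu(A_i)$ and $\sum_i c_{ij}=\mu(A_j)$, which in particular forces $c_{ij}=0$ whenever $\mu(A_i)=0$ or $\mu(A_j)=0$.

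The key idea is to smear $c_{ij}$ out as a product measure on $A_i\times A_j$. For each pair $(i,j)$ with $\mu(A_i)\mu(A_j)>0$, let $\mu_i$ denote the normalized restriction $\mu(\cdot\cap A_i)/\mu(A_i)$, a Borel probability measure on $K$ concentrated on $A_i$. Define
\[
\widehat{\nu} \;=\; \sum_{i,j:\,\mu(A_i)\mu(A_j)>0} c_{ij}\cdot (\mu_i\times\mu_j),
\]
viewed as a countably additive Borel measure on $K\times K$, and restrict it to $\mathcal{R}$.

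Three verifications remain. First, $\widehat{\nu}$ has total mass $\sum_{i,j}c_{ij}=\nu(K\times K)=1$. Second, for each atomic rectangle $A_i\times A_j$ only the $(i,j)$-th summand contributes, and it contributes exactly $c_{ij}$, so $\widehat{\nu}$ agrees with $\nu$ on the atoms of $\mathcal{A}\otimes\mathcal{A}$ and hence on all of $\mathcal{A}\otimes\mathcal{A}$ by additivity. Third, for every $B\in Bor(K)$,
\[
\widehat{\nu}(B\times K) \;=\; \sum_{i,j} c_{ij}\,\mu_i(B) \;=\; \sum_i \mu_i(B)\sum_j c_{ij} \;=\; \sum_i \mu(B\cap A_i) \;=\; \mu(B),
\]
and symmetrically for the second marginal; in these sums one may freely omit indices with $\mu(A_i)=0$ since both $c_{ij}$ and $\mu(B\cap A_i)$ vanish there.

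There is no serious obstacle here: once one decides to extend $\nu$ as a product measure inside each cell $A_i\times A_j$, every requirement is automatic. The only mild subtlety is bookkeeping around atoms of $\mathcal{A}$ having $\mu$-measure zero, and this is handled directly by the marginal constraint forcing the associated $c_{ij}$ to vanish.
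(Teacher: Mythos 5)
Your construction is correct: the matrix $(c_{ij})$ has the right row and column sums, each $\mu_i\times\mu_j$ lives on the cell $A_i\times A_j$, so $\widehat{\nu}$ returns the value $c_{ij}$ on each atom of $\mathcal{A}\otimes\mathcal{A}$ and has marginals $\mu$ on every Borel rectangle; the null-atom bookkeeping is exactly as you say (the marginal condition forces $c_{ij}=0$ there). The route is genuinely more direct than the paper's. The paper proves first a one-step extension lemma --- adjoining a single Borel set $B$ to $\mathcal{A}$ and redistributing mass inside each cell $S_i\times S_j$ proportionally to $\mu(S_i\cap B^{\varepsilon_1})\mu(S_j\cap B^{\varepsilon_2})$ --- and then obtains $\widehat{\nu}\in P(\mathcal{R},\mu)$ as a cluster point of the net of finite-algebra extensions, using compactness of $P(\mathcal{R},\mu)$ in $[0,1]^{\mathcal{R}}$. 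Your formula $\sum_{i,j}c_{ij}\,(\mu_i\times\mu_j)$ is precisely the closed form of iterating that one-step conditional-product extension over all Borel sets at once, so the compactness argument becomes unnecessary, and as a bonus your extension is countably additive on $\sigma(\mathcal{R})$ (indeed it extends to a Radon measure on $K\times K$ as a finite convex combination of products of Radon measures), so Theorem \ref{rectextlemma} is not even needed for measures produced this way. One small wording caution: the product measure is canonically defined on the product $\sigma$-algebra $\sigma(Bor(K)\otimes Bor(K))$, which for nonmetrizable $K$ may be smaller than $Bor(K\times K)$; since you only restrict to $\mathcal{R}$ this is harmless, but ``Borel measure on $K\times K$'' should either be weakened to the product $\sigma$-algebra or justified via the Radon product. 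What the paper's less explicit approach buys is reusability: the same finite-extension-plus-compactness machinery is what later produces, in Lemma \ref{mainlemma}, measures satisfying the extra constraints $\nu(B_\eta\times B_\eta)=\frac{1}{2}$ on uncountably many sets, where no single explicit formula is available.
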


\begin{proof}
Let us fix a finite algebra $\mathcal{A}\sub Bor(K)$ and  $\nu\in P(\mathcal{A}\otimes\AAA,\mu)$.
Let  $\mathcal{A}_1=alg(\mathcal{A}\cup\{B\})$ where $B\in Bor(K)\setminus\mathcal{A}$.  We shall check first that
$\nu$ can be extended to  $\nu_1\in P(\mathcal{A}_1\otimes\mathcal{A}_1,\mu )$.

Let $\{S_1,\ldots,S_l$\} be the family of all  atoms of $\mathcal{A}$ having positive measure.
It is sufficient to define $\nu_1$ only on atoms of $\mathcal{A}_1\times\mathcal{A}_1$. For $i,j\le l$ let \[\alpha_{i,j}={\nu(S_i\times S_j)}/(\mu(S_i)\mu(S_j)).\]
For $\varepsilon_1,\varepsilon_2\in\{0,1\}$ put
\[\nu_1((S_i\times S_j)\cap(B^{\varepsilon_1}\times B^{\varepsilon_2}))=\mu(S_i\cap B^{\varepsilon_1})\mu(S_j\cap B^{\varepsilon_2})\alpha_{i,j}.\]
It is easy to check that this uniquely defines the required $\nu_1 \in P(\mathcal{A}_1\otimes\mathcal{A}_1,\mu)$ (cf.\ the proof of the next lemma).

It follows that $\nu$ admits an extension to $\nu_\DD\in P(\DD\otimes\DD,\mu)$ for every finite algebra $\DD$ such that $\AAA\sub\DD\sub Bor(K)$. Now the assertion follows by compactness argument since  the set $P(\RR,\mu)$ is closed. in the space $[0,1]^\RR$.
\end{proof}

\begin{lemma}\label{extindeplemma}
 Let $\mathcal{A}$ be a finite subalgebra of $Bor(K)$, $\mathcal{A}_1=alg(\mathcal{A}\cup\{B\})$, where $B\in Bor(K)$ is $\mu$-independent of $\mathcal{A}$ and $\mu(B)={1}/{2}$.

 Then for every $\nu\in P(\mathcal{A}\otimes\mathcal{A},\mu)$ there exists an extension $\nu_1\in P(\mathcal{A}_1\otimes\mathcal{A}_1,\mu)$ of $\nu$ such that $\nu_1(B\times B)=\frac{1}{2}$.
\end{lemma}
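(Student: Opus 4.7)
The plan is to define $\nu_1$ directly on the atoms of $\mathcal{A}_1\otimes\mathcal{A}_1$ and then check the four required properties: the marginals on $\mathcal{A}_1$, agreement with $\nu$ on $\mathcal{A}\otimes\mathcal{A}$, total mass one, and the value $\frac{1}{2}$ at $B\times B$. The main idea is that, while the extension in the proof of Lemma \ref{extlemma} distributes each $\nu(S_i\times S_j)$ evenly over the four rectangles indexed by $(\varepsilon_1,\varepsilon_2)\in\{0,1\}^2$ and therefore yields $\nu_1(B\times B)=\frac{1}{4}$, we can instead concentrate this mass only on the ``diagonal'' combinations $(0,0)$ and $(1,1)$ to double the weight of $B\times B$.

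Concretely, let $S_1,\ldots,S_l$ be the atoms of $\mathcal{A}$ of positive measure. By independence and $\mu(B)=\frac{1}{2}$ we have $\mu(S_i\cap B^\varepsilon)=\frac{1}{2}\mu(S_i)$ for every $i$ and every $\varepsilon\in\{0,1\}$. The atoms of $\mathcal{A}_1\otimes\mathcal{A}_1$ are the sets $(S_i\cap B^{\varepsilon_1})\times(S_j\cap B^{\varepsilon_2})$, and I would set
\[
\nu_1\bigl((S_i\cap B^{\varepsilon_1})\times(S_j\cap B^{\varepsilon_2})\bigr)=
\begin{cases}
\tfrac{1}{2}\,\nu(S_i\times S_j), & \text{if }\varepsilon_1=\varepsilon_2,\\[2pt]
0, & \text{if }\varepsilon_1\neq\varepsilon_2,
\end{cases}
\]
and extend additively to $\mathcal{A}_1\otimes\mathcal{A}_1$. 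All values are non-negative, so $\nu_1$ is a non-negative finitely additive set function on the atoms.

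The verification then reduces to four routine summations. Summing over $(\varepsilon_1,\varepsilon_2)$ gives $\nu_1(S_i\times S_j)=\nu(S_i\times S_j)$, so $\nu_1$ extends $\nu$ on $\mathcal{A}\otimes\mathcal{A}$; in particular the total mass is $1$. For the first marginal, fix $\varepsilon_1$ and sum over $j$ and $\varepsilon_2$:
\[
\sum_{\varepsilon_2,\,j}\nu_1\bigl((S_i\cap B^{\varepsilon_1})\times(S_j\cap B^{\varepsilon_2})\bigr)=\tfrac{1}{2}\sum_j\nu(S_i\times S_j)=\tfrac{1}{2}\mu(S_i)=\mu(S_i\cap B^{\varepsilon_1}),
\]
which yields $\nu_1(C\times K)=\mu(C)$ for every atom $C$ of $\mathcal{A}_1$, and hence for every $C\in\mathcal{A}_1$; the second marginal is symmetric. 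Finally,
\[
\nu_1(B\times B)=\sum_{i,j}\nu_1\bigl((S_i\cap B)\times(S_j\cap B)\bigr)=\tfrac{1}{2}\sum_{i,j}\nu(S_i\times S_j)=\tfrac{1}{2}.
\]

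There is no real obstacle here beyond choosing the right ansatz; the only point worth a moment's thought is checking that no negative values appear and that one truly needs both $\mu(B)=\frac{1}{2}$ and the $\mu$-independence of $B$ from $\mathcal{A}$ so that $\mu(S_i\cap B^{\varepsilon_1})$ factorises as $\frac{1}{2}\mu(S_i)$, which is exactly what makes the marginal computation work with the concentrated weights above.
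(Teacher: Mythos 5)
Your construction is exactly the one in the paper: you place the mass $\frac{1}{2}\nu(S_i\times S_j)$ on the diagonal combinations $\varepsilon_1=\varepsilon_2$ and zero elsewhere, and verify the marginals using $\mu(S_i\cap B^{\varepsilon})=\frac{1}{2}\mu(S_i)$ from independence and $\mu(B)=\frac{1}{2}$. This matches the paper's proof in both the ansatz and the verification, so the proposal is correct and essentially identical (the restriction to atoms of positive measure is immaterial).
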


\begin{proof}
We extend $\nu$ to $\nu_1\in P(\mathcal{A}_1\times\mathcal{A}_1,\mu)$ in a similar way to the one presented in the proof of Lemma \ref{extlemma}.

Let $T_1,\ldots, T_k$ be the list of all the atoms of $\mathcal{A}$. For all $i,j\le k$ and $\varepsilon_1,\varepsilon_2\in\{0,1\}$ put
\[\textstyle\nu_1((T_i\times T_j)\cap(B^{\varepsilon_1}\times B^{\varepsilon_2}))=\frac{1}{2}\nu(T_i\times T_j)\]
 if $\varepsilon_1=\varepsilon_2$ and $0$ otherwise. Then
\[\textstyle\nu_1(B\times B)=\sum_i\sum_j\nu_1((T_i\times T_j)\cap(B\times B))=\frac{1}{2}\sum_i\sum_j\nu(T_i\times T_j)=\frac{1}{2}.\]
We now prove that $\nu_1\in P(\mathcal{A}_1\otimes\mathcal{A}_1,\mu)$. It is sufficient to check
that $\nu_1(S\times K)=\nu_1(K\times S)=\mu(S)$ for every atom $S$ of the algebra $\mathcal{A}_1$.
We have
\[\textstyle\nu_1((T_i\cap B)\times K)=\sum_j\nu_1((T_i\cap B)\times T_j)=\]
\[=\textstyle\sum_j\left(\nu_1((T_i\cap B)\times(T_j\cap B))+\nu_1((T_i\cap B)\times(T_j\cap B^c))\right)=\]
\[=\textstyle\sum_j\nu_1((T_i\cap B)\times(T_j\cap B))=\frac{1}{2}\sum_j\nu(T_i\times T_j)=\]
\[=\textstyle\frac{1}{2}\nu(T_i\times K)=\frac{1}{2}\mu(T_i)=\mu(T_i\cap B),\]
where the last identity follows from the $\mu$-independence of $B$ and $\mathcal{A}$. Similarly one checks remaining possibilities.
\end{proof}

\begin{lemma}\label{mainlemma} Let $\mu\in P(K)$ be a homogeneous measure of type $\omega_1$ and suppose that
 $\langle B_\xi\in Bor(K): \xi<\omega_1\rangle$,  $\langle U_\xi\in Open(K): \xi<\omega_1\rangle$ and $\mathcal{C}_\xi$ are as in Corollary \ref{indepcor}.

 For every $\xi<\omega_1$ there is $\nu_\xi\in P(\mathcal{R},\mu)$ such that:
\begin{itemize}
	\item $\nu_\xi(B_\eta\times B_\eta)=\frac{1}{2}$ for $\eta\ge\xi$,
	\item $\nu_\xi(A\times A)=(\mu\otimes\mu)(A\times A)$ for every $A\in\mathcal{C}_\xi$.
\end{itemize}
\end{lemma}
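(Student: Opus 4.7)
The plan is to obtain $\nu_\xi$ by a compactness argument inside the space $P(\RR,\mu)$, endowed with the topology inherited from the product $[0,1]^\RR$. Recall that $P(\RR,\mu)$ is a closed (hence compact) subspace of $[0,1]^\RR$: finite additivity, normalization and the marginal identities $\nu(A\times K)=\nu(K\times A)=\mu(A)$ are all closed conditions, each involving only finitely many coordinates.

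For every pair of finite sets $F\sub[\xi,\omega_1)$ and $G\sub\CC_\xi$ let
\[S_{F,G}=\{\nu\in P(\RR,\mu):\nu(B_\eta\times B_\eta)=\tfrac12\text{ for }\eta\in F,\ \nu(A\times A)=\mu(A)^2\text{ for }A\in G\}.\]
Each $S_{F,G}$ is closed in $P(\RR,\mu)$, and $S_{F_1\cup F_2,\,G_1\cup G_2}\sub S_{F_1,G_1}\cap S_{F_2,G_2}$. Therefore, once I check that every $S_{F,G}$ is nonempty, the family $\{S_{F,G}\}$ will have the finite intersection property, and by compactness any element of the total intersection will furnish the desired $\nu_\xi$.

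To realize a fixed finite pair $(F,G)$, I enumerate $F=\{\eta_1<\eta_2<\cdots<\eta_n\}$ in increasing order, set $\AAA_0=alg(G)$, and begin with the restriction $\nu^0$ of $\mu\otimes\mu$ to $\AAA_0\otimes\AAA_0$; plainly $\nu^0\in P(\AAA_0\otimes\AAA_0,\mu)$ and $\nu^0(A\times A)=\mu(A)^2$ for every $A\in G$. Inductively, given $\nu^{i-1}\in P(\AAA_{i-1}\otimes\AAA_{i-1},\mu)$, I let $\AAA_i=alg(\AAA_{i-1}\cup\{B_{\eta_i}\})$ and invoke Lemma \ref{extindeplemma} to obtain an extension $\nu^i\in P(\AAA_i\otimes\AAA_i,\mu)$ with $\nu^i(B_{\eta_i}\times B_{\eta_i})=\tfrac12$. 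The hypothesis of Lemma \ref{extindeplemma} is met because $G\sub\CC_\xi\sub\CC_{\eta_i}$ and each previously added $B_{\eta_j}$ ($j<i$) lies in $\CC_{\eta_i}$, so $\AAA_{i-1}\sub alg(\CC_{\eta_i})$; then clause (iii) of Corollary \ref{indepcor} yields $\mu$-independence of $B_{\eta_i}$ from $\AAA_{i-1}$, while clause (i) gives $\mu(B_{\eta_i})=\tfrac12$. After $n$ steps, Lemma \ref{extlemma} extends $\nu^n$ to some $\hat\nu\in P(\RR,\mu)$; as each extension preserves values on the earlier subalgebra, all the conditions defining $S_{F,G}$ survive, and $\hat\nu\in S_{F,G}$.

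\textbf{Main obstacle.} The only delicate point is the bookkeeping that makes Lemma \ref{extindeplemma} applicable at every stage: processing the elements of $F$ in \emph{increasing} order and exploiting the monotonicity $\CC_\xi\sub\CC_{\eta_1}\sub\cdots\sub\CC_{\eta_n}$ built into Corollary \ref{indepcor} is precisely what delivers the required $\mu$-independence at step $i$. Once this ordering is fixed, the two extension lemmas together with the compactness of $P(\RR,\mu)$ in $[0,1]^\RR$ dispose of the rest.
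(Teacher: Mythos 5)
Your proposal is correct and follows essentially the same route as the paper: build measures realizing each finite configuration by processing the sets $B_{\eta}$ inductively with Lemma \ref{extindeplemma}, extend to $P(\mathcal{R},\mu)$ via Lemma \ref{extlemma}, and then use compactness of $P(\mathcal{R},\mu)$ in $[0,1]^{\mathcal{R}}$ (your finite-intersection-property formulation is just the paper's cluster-point-of-a-net argument in different clothing). Your explicit check that increasing enumeration of $F$ and the monotonicity of the families $\mathcal{C}_{\eta}$ give the needed $\mu$-independence is exactly the point the paper leaves implicit.
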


\begin{proof}
Fix $\xi<\omega_1$. Let $\mathcal{A}$ be a finite algebra generated by some elements of $\CC_\xi$ and $I$ be a finite subset of $\omega_1\sm\xi$. Then there is
$\nu_{\AAA, I}\in P(\RR,\mu)$ such that

\begin{itemize}
	\item[---] $\nu_\xi(B_\eta\times B_\eta)=\frac{1}{2}$ for $\eta\in I$,
	\item[---] $\nu_\xi(A\times A)=(\mu\otimes\mu)(A\times A)$ for every $A\in\mathcal{A}$.
\end{itemize}

Indeed, such $\nu_{\AAA, I}$ can be first defined on $alg(\AAA\cup\{B_\eta:\eta\in I \})$ using Lemma \ref{extindeplemma} and induction on $|I|$ and then extended
to a member of $P(\RR,\mu)$ using Lemma \ref{extlemma}.

Now the existence of $\nu_\xi$ with the required properties follows again by  compactness argument:  $P(\RR,\mu)$ is clearly a closed
subset of $[0,1]^\RR$, so it is compact in the topology of convergence on all elements of $\RR$. Hence the required measure $\nu_\xi$ can be defined as a  cluster point of
the net $\nu_{\AAA, I}$ indexed by the pairs $(\AAA, I)$, where $\AAA$ is an algebra generated by a finite subset of $\CC_\xi$ and $I$ is a finite subset of $\omega_1\sm\xi$.
\end{proof}

\section{Main result}\label{section:4}

We are now ready to prove our main result.

\begin{theorem}\label{mainthm}
Let $P(K\times K)$ have countable tightness. Then every $\mu\in P(K)$ has countable type.
\end{theorem}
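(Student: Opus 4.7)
The plan is to argue by contradiction using the auxiliary measures constructed in Lemma \ref{mainlemma}. Suppose some $\mu \in P(K)$ has uncountable Maharam type; by Lemma \ref{homogeneous} I may assume $\mu$ is homogeneous of type $\omega_1$. I fix a sufficiently small $\varepsilon > 0$ with $(1/2+\varepsilon)^2 < 1/2 - 2\varepsilon$ (e.g.\ $\varepsilon = 1/16$), and apply Corollary \ref{indepcor} to obtain sequences $\langle B_\xi\rangle$, $\langle U_\xi\rangle$, and the algebras $\mathcal{C}_\xi$. Lemma \ref{mainlemma} then produces, for each $\xi < \omega_1$, a measure $\nu_\xi \in P(\mathcal{R},\mu)$ that behaves like the diagonal measure on the tail rectangles $B_\eta \times B_\eta$ ($\eta \geq \xi$) and like the product $\mu\otimes\mu$ on rectangles $A \times A$ with $A \in \mathcal{C}_\xi$. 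Theorem \ref{rectextlemma} lets me view each $\nu_\xi$ as an element of $P(K\times K)$.

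The strategy is to exhibit a cluster point $\nu_\infty$ of the net $(\nu_\xi)_{\xi<\omega_1}$ that witnesses failure of countable tightness of $P(K\times K)$: $\nu_\infty$ will lie in $\overline{\{\nu_\xi:\xi<\omega_1\}}$ automatically, but I will show that for every countable $T\subseteq \omega_1$, $\nu_\infty \notin \overline{\{\nu_\xi:\xi\in T\}}$. The separation will come from incompatible estimates on the mass assigned to the open rectangle $U_\eta \times U_\eta$ for a suitably chosen $\eta$.

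For the upper bound on $\nu_\infty$, I fix $\eta$ and note that for every $\xi > \eta$, $U_\eta \in \mathcal{C}_\xi$, so $\nu_\xi(U_\eta \times U_\eta) = \mu(U_\eta)^2 \leq (1/2+\varepsilon)^2$. Since $\nu_\infty$ is a cluster point, it lies in the closure of each tail $\{\nu_\xi : \xi > \eta\}$, so Remark \ref{remark}(a) applied to the open set $U_\eta \times U_\eta$ yields $\nu_\infty(U_\eta \times U_\eta) \leq (1/2+\varepsilon)^2$. For the lower bound, I fix countable $T \subseteq \omega_1$, any $\eta > \sup T$, and any $\nu_0 \in \overline{\{\nu_\xi:\xi\in T\}}$. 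Using inner regularity of $\mu$, I pick a closed $F_\eta \subseteq B_\eta$ with $\mu(B_\eta\setminus F_\eta) < \varepsilon$; the marginal property of $\nu_\xi$ gives $\nu_\xi(F_\eta \times F_\eta) \geq \nu_\xi(B_\eta \times B_\eta) - 2\mu(B_\eta\setminus F_\eta) > 1/2 - 2\varepsilon$ for every $\xi \in T$, since $\eta > \sup T \geq \xi$ forces $\nu_\xi(B_\eta \times B_\eta) = 1/2$. Remark \ref{remark}(b) applied to the closed rectangle $F_\eta \times F_\eta$ then propagates this estimate to $\nu_0(U_\eta \times U_\eta) \geq \nu_0(F_\eta \times F_\eta) \geq 1/2 - 2\varepsilon$.

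By the choice of $\varepsilon$, $(1/2+\varepsilon)^2 < 1/2 - 2\varepsilon$, so the weak$^*$-open set $\{\nu \in P(K\times K) : \nu(U_\eta \times U_\eta) > (1/2+\varepsilon)^2\}$ contains $\nu_0$ but not $\nu_\infty$, separating $\nu_\infty$ from $\overline{\{\nu_\xi:\xi\in T\}}$ and contradicting countable tightness. The main obstacle I anticipate is making the upper and lower estimates \emph{numerically compatible}: this is precisely why Corollary \ref{indepcor} demands the sandwich $B_\eta \subseteq U_\eta$ with small $\mu(U_\eta\setminus B_\eta)$, and it forces me to read the upper bound through an open set ($U_\eta \times U_\eta$) and the lower bound through a closed inner approximation ($F_\eta \times F_\eta$) so that both parts of Remark \ref{remark} can be used on the same rectangle.
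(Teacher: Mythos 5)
Your proof is correct and follows essentially the same route as the paper: the same cluster point of the measures $\widehat{\nu}_\xi$ from Lemma \ref{mainlemma}, separated from the closure of any countable subfamily by the incompatible estimates $(1/2+\varepsilon)^2$ versus $1/2-2\varepsilon$ obtained via Remark \ref{remark} and the sandwich $F\subseteq B_\xi\subseteq U_\xi$. The only (immaterial) difference is that you compare both bounds on the open rectangle $U_\eta\times U_\eta$, while the paper compares them on the closed rectangle $F\times F$.
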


\begin{proof}
Assume for the sake of contradiction that there exists $\mu\in P(K)$ of uncountable type. Without loss of generality we can assume that $\mu$ is a homogeneous measure of
 type $\omega_1$, see Lemma \ref{homogeneous}. Let $0<\varepsilon<1/16$.

 Take the sequences $\langle B_\xi: \xi<\omega_1\rangle$ and $\langle U_\xi: \xi<\omega_1\rangle$ as in Corollary \ref{indepcor}. For every $\xi<\omega_1$ take $\nu_\xi\in P(\mathcal{R},\mu)$ as in Lemma \ref{mainlemma} and extend it to $\widehat{\nu_\xi}\in P(K\times K)$ using  Theorem  \ref{rectextlemma}. Let $\nu\in P(K\times K)$ be a cluster point of the sequence $\langle\widehat{\nu}_\xi: \xi<\omega_1\rangle$, i.e.
 \[\nu\in\bigcap_{\xi<\omega_1}\overline{\{\widehat{\nu}_\eta: \eta\ge\xi\}}.\]
 We shall show that $\nu\notin\overline{\{\widehat{\nu}_\eta: \eta\in I\}}$ for every countable $I\sub \omega_1$, which  will contradict the assumption that the tightness of $P(K\times K)$ is countable.

Let $I\sub\omega_1$ be countable. Take $\xi>\sup I$. By regularity of $\mu$ there exists closed $F\sub K$ such that $F\sub B_\xi$ and $\mu(B_\xi\setminus F)<\varepsilon$.
For every $\eta\in I$ we have $\widehat{\nu}_\eta(B_\xi\times B_\xi)=\frac{1}{2}$, so
\[\widehat{\nu}_\eta((B_\xi\times B_\xi)\setminus(F\times F))\le
\widehat{\nu}_\eta((B_\xi\sm F)\times K)+ \widehat{\nu}_\eta(K\times (B_\xi\sm F))=2\mu(B_\xi\sm F)<2\eps.\]
Therefore $\widehat{\nu}_\eta(F\times F)>\frac{1}{2}-2\varepsilon$ whenever $\eta\in I$.

On the other hand,  if $\eta>\xi$ then 
\[ \nu_\eta(U_\xi\times U_\xi)=(\mu\otimes\mu)(U_\xi\times U_\xi)=\mu(U_\xi)^2<(1/2+\eps)^2,\]
so by Remark \ref{remark}(a)  $\nu(F\times F)\le \nu(U_\xi\times U_\xi)\le (1/2+\eps)^2$.

As $\varepsilon<1/16$, we have  $\left( {1}/{2}+\varepsilon\right)^2< {1}/{2}-2\varepsilon$. We conclude from Remark \ref{remark}(b)  that  $\nu\notin\overline{\{\widehat{\nu}_\eta: \eta\in I\}}$ and
the proof is complete.

\end{proof}

Let us remark that modifying our way to Theorem \ref{mainthm} one can prove the following more general result.

\begin{theorem}\label{mainthm2}
Suppose that $K$ and $L$ are compacta carrying measures of uncountable type. Then $\tau( P(K\times L))\ge\omega_1$.
\end{theorem}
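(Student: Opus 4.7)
The plan is to parallel the proof of Theorem \ref{mainthm}, replacing the single homogeneous measure $\mu$ on $K$ by two homogeneous measures $\mu$ on $K$ and $\lambda$ on $L$, and working with joint measures on $K\times L$ whose two marginals are $\mu$ and $\lambda$ respectively. By Lemma \ref{homogeneous} we may assume both $\mu$ and $\lambda$ are homogeneous of type $\omega_1$. Apply Corollary \ref{indepcor} separately to $\mu$ and to $\lambda$ with a fixed $\varepsilon<1/16$ to obtain Borel sequences $\langle B_\xi^K\rangle$, $\langle U_\xi^K\rangle$ in $K$ and $\langle B_\xi^L\rangle$, $\langle U_\xi^L\rangle$ in $L$ with the analogous independence properties; write $\mathcal{C}_\xi^K$ and $\mathcal{C}_\xi^L$ for the corresponding families.

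Next I would introduce the class $P(\mathcal{A}\otimes\mathcal{B},\mu,\lambda)$ of finitely additive probabilities on $\mathcal{A}\otimes\mathcal{B}$ whose first marginal is $\mu\uhr\mathcal{A}$ and second marginal is $\lambda\uhr\mathcal{B}$, and generalize the extension apparatus of Section \ref{section:3}. Theorem \ref{rectextlemma} goes through verbatim for $\RR=Bor(K)\otimes Bor(L)$, since its proof uses only the lattice of finite unions of closed rectangles. For Lemma \ref{extlemma}, given finite $\mathcal{A}\sub Bor(K)$, $\mathcal{B}\sub Bor(L)$, and $\nu\in P(\mathcal{A}\otimes\mathcal{B},\mu,\lambda)$, one splits atoms of $\mathcal{A}$ (resp.\ $\mathcal{B}$) by a new Borel set from $K$ (resp.\ from $L$) according to the conditional ratios, exactly as before. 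For the crucial Lemma \ref{extindeplemma}, given $B\in Bor(K)$ $\mu$-independent of $\mathcal{A}$ with $\mu(B)=1/2$ and $B'\in Bor(L)$ $\lambda$-independent of $\mathcal{B}$ with $\lambda(B')=1/2$, define on atoms $T_i\times S_j$ (of $\mathcal{A}\otimes\mathcal{B}$)
\[\textstyle\nu_1((T_i\times S_j)\cap(B^{\eps_1}\times B'^{\eps_2}))=\frac{1}{2}\nu(T_i\times S_j)\]
when $\eps_1=\eps_2$ and $0$ otherwise; the marginal and independence checks go through as in Lemma \ref{extindeplemma}, yielding $\nu_1(B\times B')=\frac{1}{2}$.

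With these tools in hand, the analogue of Lemma \ref{mainlemma} produces, for each $\xi<\omega_1$, a measure $\nu_\xi\in P(\RR,\mu,\lambda)$ satisfying $\nu_\xi(B_\eta^K\times B_\eta^L)=\frac{1}{2}$ for all $\eta\ge\xi$ and $\nu_\xi(A\times A')=\mu(A)\lambda(A')$ whenever $A\in\mathcal{C}_\xi^K$, $A'\in\mathcal{C}_\xi^L$; the proof is the same induction-plus-compactness argument as before. Extending each $\nu_\xi$ to $\widehat{\nu}_\xi\in P(K\times L)$ via the generalized Theorem \ref{rectextlemma}, take any $weak^*$ cluster point $\nu$ of $\langle\widehat{\nu}_\xi:\xi<\omega_1\rangle$.

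Finally I would repeat the argument in Theorem \ref{mainthm} to show that $\nu\notin\overline{\{\widehat{\nu}_\eta:\eta\in I\}}$ for every countable $I\sub\omega_1$, contradicting countable tightness of $P(K\times L)$. Given such $I$ take $\xi>\sup I$ and pick closed $F\sub B_\xi^K$ with $\mu(B_\xi^K\sm F)<\eps$ and closed $F'\sub B_\xi^L$ with $\lambda(B_\xi^L\sm F')<\eps$. For $\eta\in I$ one has $\widehat{\nu}_\eta(B_\xi^K\times B_\xi^L)=\frac{1}{2}$ and, using the marginals to control the symmetric difference, $\widehat{\nu}_\eta(F\times F')>\frac{1}{2}-2\eps$. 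On the other hand, for $\eta>\xi$, $\widehat{\nu}_\eta(U_\xi^K\times U_\xi^L)=\mu(U_\xi^K)\lambda(U_\xi^L)<(1/2+\eps)^2$, so by Remark \ref{remark} $\nu(F\times F')\le(1/2+\eps)^2<\frac{1}{2}-2\eps$. The main obstacle, should one arise, is verifying the marginal identities in the generalized Lemma \ref{extindeplemma}, since now the independence assumptions on $B$ and $B'$ live in different factor spaces and must be used on each side separately; however, the calculation is the literal transcription of the one in Lemma \ref{extindeplemma} with $T_j$ replaced by $S_j$ and the second occurrence of $B$ by $B'$, so no new ideas are needed.
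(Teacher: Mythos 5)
Your proposal is correct and is exactly the modification the paper has in mind: the paper offers no separate proof of Theorem \ref{mainthm2}, saying only that it follows by adapting the proof of Theorem \ref{mainthm}, and your two-marginal versions of Lemmas \ref{extlemma}, \ref{extindeplemma} and \ref{mainlemma} (with the final $F\times F'$ versus $U_\xi^K\times U_\xi^L$ estimate) carry out that adaptation faithfully, including the key point that the independence hypotheses are used separately in each factor when checking the marginals.
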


\section{Some consequences and open problems}\label{section:5}

In this final section we present several consequences of Theorem \ref{mainthm} as well as some open problems.

It is not difficult to check that if every $\mu\in P(K)$ has countable type then every $\nu\in P(K\times K)$ has countable type as well.
In connection with Problem \ref{mainproblem} and Theorem \ref{mainthm} it is natural to ask the following.

\begin{problem}\label{ctbltauproblem}
Suppose that $P(K)$ has countable tightness. Does $P(K\times K)$ have  countable tightness?
\end{problem}

As far as we know, the problem is open. Note that $P(K)\times P(K)$ embeds into $P(K\times K)$ and if $\tau(P(K))=\omega$ then $\tau(P(K)\times P(K))=\omega$, since the countable tightness is productive for compact spaces (see Engelking \cite{En}, 3.12.8).
However, the space $P(K\times K)$ seems to be far more complicated  than $P(K)\times P(K)$.

\subsection{Rosenthal compacta}

Recall that a compact space $K$ is said to be Rosenthal compact if $K$ embeds into $B_1(X)$, the space of Baire-one functions on a Polish space $X$ equipped with the topology of pointwise convergence. The class of Rosenthal compacta is stable under taking countable product and, by
a result of Godefroy \cite{Go80},   if $K$ is Rosenthal compact, then so is $P(K)$. Moreover, Rosenthal compacta are Fr\'echet-Urysohn spaces (see Bourgain, Fremlin and Talagrand \cite{BFT}), hence they have countable tightness. This, together with Theorem \ref{mainthm}, implies the result of Bourgain and \stevo\  mentioned in the introductory section.

\begin{corollary}
If $K$ is Rosenthal compact, then every $\mu\in P(K)$ has countable type.
\end{corollary}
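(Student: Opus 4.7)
The plan is to chain the three ingredients that the paper has just emphasized about Rosenthal compacta with the main theorem. The corollary should fall out essentially as a one-line formal consequence once the right space is fed into Theorem \ref{mainthm}.

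First I would argue that $K\times K$ is Rosenthal compact. This uses the stability of the Rosenthal class under finite (in fact countable) products, explicitly noted in the paragraph above the corollary. Next, by Godefroy's theorem \cite{Go80}, $P(L)$ is Rosenthal compact whenever $L$ is; applying this to $L=K\times K$, we conclude that $P(K\times K)$ is Rosenthal compact. Then, by the Bourgain--Fremlin--\stevo{} theorem \cite{BFT}, every Rosenthal compactum is Fr\'echet--Urysohn, and in particular has countable tightness, so $\tau(P(K\times K))=\omega$.

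With countable tightness of $P(K\times K)$ established, Theorem \ref{mainthm} applies directly and yields that every $\mu\in P(K)$ has countable Maharam type, which is exactly the assertion of the corollary.

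There is essentially no obstacle here beyond citing the right facts in the right order; the substantive mathematical content has already been carried out in Theorem \ref{mainthm}. The only thing to be careful about is that one must pass through $P(K\times K)$ rather than $P(K)$: applying Theorem \ref{mainthm} requires countable tightness of $P(K\times K)$, not merely of $P(K)$, and this is precisely what Godefroy's stability result (combined with the stability of Rosenthal compactness under finite products) delivers. This is what makes the present route preferable to a naive appeal to countable tightness of $P(K)$ alone, and it is the reason the main theorem is formulated for the square.
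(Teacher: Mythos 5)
Your proposal is correct and follows exactly the paper's own route: Rosenthal compactness of $K\times K$ (stability under finite/countable products), Godefroy's theorem giving that $P(K\times K)$ is Rosenthal compact, the Bourgain--Fremlin--Talagrand theorem giving the Fr\'echet--Urysohn property and hence countable tightness, and then Theorem \ref{mainthm}. Your remark that one must feed $K\times K$ (not just $K$) into the main theorem is precisely the point the paper relies on as well.
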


\subsection{Property (C) of Corson}

Let $X$ be a Banach space. Corson \cite{Co61} introduced the following convex analogue of the Lindel\"of property:  $X$ is said to have property (C) if for every family $\mathcal{C}$ of convex closed subsets of $X$ we have $\bigcap\mathcal{C}\neq\emptyset$ provided that every countable subfamily of $\mathcal{C}$ has nonempty intersection. For $C(K)$ spaces,
 Pol \cite[ Lemma 3.2]{Po82} gave the following characterization of  property (C).

\begin{theorem}[Pol]\label{polthm}
For a compact space K the following are equivalent:
\begin{enumerate}
	\item the space C(K) has property (C);
	\item for every family $\mathcal{M}\subseteq P(K)$ and every $\mu\in\overline{\mathcal{M}}$ there exists countable subfamily $\mathcal{N}\subseteq\mathcal{M}$ such that $\mu\in\overline{\rm conv}\,\mathcal{N}$.
\end{enumerate}
\end{theorem}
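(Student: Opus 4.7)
\medskip\noindent\textbf{Plan.} I handle the two implications separately.

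\medskip\noindent\textbf{For $(1)\Rightarrow(2)$:} Given $\mathcal{M}\subseteq P(K)$ and $\mu\in\overline{\mathcal{M}}^{w^*}$, I apply property (C) of $C(K)$ to the family of closed convex half-spaces
\[
E_\nu \;=\; \Bigl\{f\in C(K)\colon\,\textstyle\int f\,d\nu\,\geq\,\int f\,d\mu + 1\Bigr\}, \qquad \nu\in\mathcal{M}.
\]
This family has empty intersection: given $f\in C(K)$, the hypothesis $\mu\in\overline{\mathcal{M}}^{w^*}$ produces some $\nu\in\mathcal{M}$ with $|\int f\,d(\nu-\mu)|<1$, so $f\notin E_\nu$. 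Property (C) yields a countable $\mathcal{N}\subseteq\mathcal{M}$ with $\bigcap_{\nu\in\mathcal{N}}E_\nu=\emptyset$. Applying this conclusion to every rescaling $tf$ ($t>0$) and to $-f$ gives $\int f\,d\mu\in[\inf_{\nu\in\mathcal{N}}\int f\,d\nu,\,\sup_{\nu\in\mathcal{N}}\int f\,d\nu]$ for every $f\in C(K)$; a Hahn--Banach separation in $(M(K),w^*)$ then places $\mu\in\overline{\mathrm{conv}}^{w^*}\mathcal{N}$.

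\medskip\noindent\textbf{For $(2)\Rightarrow(1)$:} Assume (2) and suppose, towards a contradiction, that property (C) fails for $C(K)$. By the Hahn--Banach-based dual characterization of (C), this failure is equivalent to the existence of a norm-bounded set $A\subseteq M(K)$ and $\lambda_0\in\overline{A}^{w^*}$ such that $\lambda_0\notin\overline{\mathrm{conv}}^{w^*}A_0$ for every countable $A_0\subseteq A$. To invoke (2) I would lift to the doubled compactum $\tilde K=K\sqcup K\sqcup\{*\}$: for each $\lambda\in A$ with $\|\lambda\|\le 1$, encode its Hahn--Jordan decomposition as the probability measure $\hat\lambda\in P(\tilde K)$ with $\lambda^+$ on the first copy, $\lambda^-$ on the second, and $(1-\|\lambda\|)\delta_*$ on the extra point. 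The projection $\pi\colon M(\tilde K)\to M(K)$, $\pi(\sigma)=\sigma|_1-\sigma|_2$, is linear and $w^*$-continuous with $\pi(\hat\lambda)=\lambda$, so a $w^*$-cluster point $\hat\lambda_0$ of $\hat A=\{\hat\lambda:\lambda\in A\}\subseteq P(\tilde K)$ projects to $\lambda_0$. Invoking (2) on $\tilde K$ produces a countable $\hat A_0\subseteq\hat A$ with $\hat\lambda_0\in\overline{\mathrm{conv}}^{w^*}\hat A_0$, and applying $\pi$ yields $\lambda_0\in\overline{\mathrm{conv}}^{w^*}\pi(\hat A_0)$, contradicting the choice of $A$.

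\medskip\noindent\textbf{Main obstacle.} The above argument invokes (2) for $\tilde K$, whereas the hypothesis only grants (2) for $K$; closing this gap is the central technical step. The plan is to show that (2) is stable when the base compactum is augmented by a finite disjoint union together with an isolated point: any $\nu\in P(\tilde K)$ decomposes into its restrictions to the three components, and each nontrivial component (after rescaling) lies in $P(K)$, allowing (2) on $K$ to be invoked componentwise. The delicate point is that the Hahn--Jordan decomposition is not $w^*$-continuous, so one must verify that the $w^*$-topology on $P(\tilde K)$ aligns with the product of the (normalized) $w^*$-topologies on each copy of $P(K)$, which requires the continuity of $\nu\mapsto\nu(\mathbf 1_{K_i})$ on positive measures. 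This continuity, combined with a careful diagonal argument, should let the countable subfamily witnessing (2) on $\tilde K$ be assembled from countable subfamilies witnessing (2) on the components.
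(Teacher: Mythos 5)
The paper itself does not prove Theorem \ref{polthm} --- it is quoted from Pol \cite{Po82} (Lemma 3.2) --- so your argument can only be judged on its own merits. Your proof of $(1)\Rightarrow(2)$ is correct and is the standard one: the sets $E_\nu$ are closed convex half-spaces, the whole family $\{E_\nu:\nu\in\mathcal{M}\}$ has empty intersection precisely because $\mu\in\overline{\mathcal{M}}$, property (C) then yields a countable $\mathcal{N}$ with $\bigcap_{\nu\in\mathcal{N}}E_\nu=\emptyset$, and the rescaling argument together with Hahn--Banach separation in $(M(K),w^*)$ (whose dual is $C(K)$) gives $\mu\in\overline{\mathrm{conv}}^{\,w^*}\mathcal{N}$.

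The direction $(2)\Rightarrow(1)$, however, contains a genuine gap, which you yourself flag. Even granting the dual characterization of property (C) from Pol's earlier work (failure of (C) yields a bounded $A\subseteq M(K)$ and $\lambda_0\in\overline{A}^{w^*}$ with $\lambda_0\notin\overline{\mathrm{conv}}^{\,w^*}A_0$ for every countable $A_0\subseteq A$; this is itself a nontrivial theorem and should be cited as such), your lifting to $\tilde K=K\sqcup K\sqcup\{*\}$ invokes statement (2) for $\tilde K$, while the hypothesis grants it only for $K$, and the sketched repair does not close this. The point you single out as delicate --- continuity of $\nu\mapsto\nu(K_i)$ --- is in fact trivial, since each $K_i$ is clopen in $\tilde K$ and its indicator is continuous; the real obstruction lies elsewhere. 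Applying (2) separately to the normalized restrictions to the two copies of $K$ produces, at best, two countable subfamilies such that each component of $\hat\lambda_0$ lies in the closed convex hull of the corresponding components; but an element of $\overline{\mathrm{conv}}^{\,w^*}\hat A_0$ uses one set of convex coefficients acting on whole measures of $P(\tilde K)$ simultaneously, so componentwise membership does not assemble into $\hat\lambda_0\in\overline{\mathrm{conv}}^{\,w^*}\hat A_0$, and no diagonal argument over the masses $\nu(K_i)$ resolves this coupling of coefficients across components. Deducing the needed transfer from stability of property (C) (or of condition (2)) under finite disjoint unions would be circular, since passing between (C) and (2) is exactly what is being proved. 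In effect, the hard half of Pol's lemma --- upgrading the probability-measure statement on $P(K)$ to control over arbitrary bounded sets of signed measures --- has been restated rather than proved.
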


Let us say that $P(K)$ has \textit{convex countable tightness} if $P(K)$ fulfils  condition (2) of Theorem \ref{polthm}. Clearly countable tightness implies convex countable tightness,
Pol \cite{Po82} asked if those properties are actually equivalent, which amounts to asking the following.

\begin{problem}\label{problem_C}
Assume $C(K)$ has property (C). Does this imply the countable tightness of $P(K)$?
\end{problem}

Frankiewicz, Plebanek and Ryll--Nardzewski (\cite{FPR}, Theorem 3.4) answered this question affirmatively assuming Martin's axiom MA$(\omega_1)$. Without any additional set-theoretic assumptions they also obtained the following partial result (\cite{FPR}, Theorem 3.2):

\begin{theorem}\label{fprnthm}
Assume that every $\mu\in P(K)$ has countable type. Then $\tau(P(K))=\omega$ if and only if $C(K)$ has property (C).
\end{theorem}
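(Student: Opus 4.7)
The direction $\tau(P(K))=\omega \Rightarrow C(K)$ has property (C) is immediate and does not depend on the countable-type hypothesis: given $\mu\in\overline{\mathcal{M}}$, countable tightness yields a countable $\mathcal{N}\sub\mathcal{M}$ with $\mu\in\overline{\mathcal{N}}\sub\overline{\rm conv}\,\mathcal{N}$, which is precisely condition~(2) of Pol's Theorem~\ref{polthm}.

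For the substantive converse, assume $C(K)$ has property~(C) and every $\mu\in P(K)$ has countable type. Given $\mathcal{M}\sub P(K)$ and $\mu\in\overline{\mathcal{M}}$, I want to produce a countable $\mathcal{M}_0\sub\mathcal{M}$ with $\mu\in\overline{\mathcal{M}_0}$. First, by Theorem~\ref{polthm}(2), pick a countable $\mathcal{N}=\{\mu_n:n<\omega\}\sub\mathcal{M}$ with $\mu\in\overline{\rm conv}\,\mathcal{N}$. Set $\tilde\mu=\tfrac12\mu+\tfrac12\sum_n 2^{-n-1}\mu_n$, so $\mu\ll\tilde\mu$ and $\mu_n\ll\tilde\mu$ for every $n$; by hypothesis $L_1(\tilde\mu)$ is separable. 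Writing $f=d\mu/d\tilde\mu$ and $f_n=d\mu_n/d\tilde\mu$ in $L_1(\tilde\mu)$, Lusin's theorem shows that $C(K)$ is $\sigma(L_\infty(\tilde\mu),L_1(\tilde\mu))$-dense in $L_\infty(\tilde\mu)$; consequently the topologies $\sigma(L_1(\tilde\mu),C(K))$ and $\sigma(L_1(\tilde\mu),L_\infty(\tilde\mu))$ coincide on $L_1(\tilde\mu)$. Hence $f$ lies in the weak closure of $\mathrm{conv}\{f_n\}$, and Mazur's theorem combined with separability of $L_1(\tilde\mu)$ yields a \emph{sequence} of finite convex combinations $\sigma_k=\sum_j a_j^{(k)}\mu_{n_j^{(k)}}$ with $\|\sigma_k-\mu\|_{\mathrm{TV}}\to 0$; in particular $\sigma_k\to\mu$ in weak$^*$.

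Next, fix a countable family $\{g_i:i<\omega\}\sub C(K)$ dense in $L_1(\tilde\mu)$. For each finite $F\sub\omega$ and rational $\eps>0$, the set $V(F,\eps)=\bigl\{\nu\in P(K):|\int g_i\,\de(\nu-\mu)|<\eps,\ i\in F\bigr\}$ is a weak$^*$-open neighborhood of $\mu$, so there exists $\nu(F,\eps)\in\mathcal{M}\cap V(F,\eps)$. Put $\mathcal{M}_0=\{\nu(F,\eps):F\in[\omega]^{<\omega},\eps\in\qu_+\}$, a countable subfamily of $\mathcal{M}$. The outstanding claim, which would finish the proof, is that $\mu\in\overline{\mathcal{M}_0}$.

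The main obstacle lies in verifying this claim. Given a basic weak$^*$ neighborhood $U=\{\nu:|\int f_j\,\de(\nu-\mu)|<\delta,\ j\le m\}$ of $\mu$, we need to find some $\nu(F,\eps)\in U$. The $L_1(\tilde\mu)$-density of $\{g_i\}$ allows us to approximate each $f_j$ by some $g_{i_j}$ in $L_1(\tilde\mu)$-norm, but this does \emph{not} control $\int(f_j-g_{i_j})\,\de\nu(F,\eps)$, because $\nu(F,\eps)\in\mathcal{M}$ need not be absolutely continuous with respect to $\tilde\mu$---the $L_1(\tilde\mu)$-approximation is silent on the singular part of $\nu(F,\eps)$. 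The expected resolution is to couple the variation convergence $\sigma_k\to\mu$ (which controls \emph{every} $f\in C(K)$ uniformly, since $|\int f\,\de(\sigma_k-\mu)|\le\|f\|_\infty\|\sigma_k-\mu\|_{\mathrm{TV}}$) with a more careful selection of $\nu(F,\eps)$, possibly iterating the construction so that $\tilde\mu$ is successively enlarged in order to absorb the singular parts of previously chosen $\nu(F,\eps)$. It is precisely at this step that the hypothesis of countable type for \emph{every} measure on $K$---and not only for $\mu$ itself---becomes essential, since each enlargement must preserve separability of the governing $L_1$-space.
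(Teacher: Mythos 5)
There is a wrinkle in the comparison you should know about: the paper contains no proof of this statement at all --- it is quoted verbatim from Frankiewicz--Plebanek--Ryll-Nardzewski \cite{FPR}, Theorem 3.2 --- so your attempt can only be judged on its own merits. The easy direction is fine: countable tightness of $P(K)$ gives condition (2) of Theorem \ref{polthm} directly, with no use of the countable-type hypothesis. The converse, however, contains a genuine gap, in fact two. First, the step asserting that $\sigma(L_1(\tilde\mu),C(K))$ coincides with $\sigma(L_1(\tilde\mu),L_\infty(\tilde\mu))$, and hence (via Mazur and separability) that $\mu$ is a total-variation limit of convex combinations $\sigma_k$ of the $\mu_n$, is false: weak$^*$ density of $C(K)$ in $L_\infty(\tilde\mu)$ does not let you replace the $\sigma(M(K),C(K))$-closure of a convex set of densities by its weak (equivalently, norm) closure in $L_1(\tilde\mu)$. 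Concretely, take $K=[0,1]$, $\mu_n$ the uniform probability on $[0,1/n]$ and $\mu=\delta_0$. Then $\mu$ lies in the weak$^*$ closed convex hull of $\{\mu_n: n\in\omega\}$ and $\mu\ll\tilde\mu$ by your very definition of $\tilde\mu$, yet every convex combination of the $\mu_n$ is nonatomic, so its variation distance to $\mu$ equals $2$; testing against $1_{\{0\}}\in L_\infty(\tilde\mu)$ shows that $d\mu/d\tilde\mu$ is not even in the weak closure of $\mathrm{conv}\{d\mu_n/d\tilde\mu : n\in\omega\}$. (This does not contradict the theorem --- here $\mu_n\to\delta_0$ weak$^*$, so tightness is untouched --- it only refutes your intermediate claim.)

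Second, and this is the crux, the ``outstanding claim'' $\mu\in\overline{\mathcal{M}_0}$ is exactly the content of the theorem and is left unproved. You diagnose correctly why the construction as given cannot close: the measures $\nu(F,\eps)\in\mathcal{M}$ you select may be singular with respect to $\tilde\mu$, so $L_1(\tilde\mu)$-approximation of a test function $f\in C(K)$ by some $g_i$ gives no control of $\int (f-g_i)\,{\rm d}\nu(F,\eps)$. But the proposed repair is only a gesture: the iteration in which $\tilde\mu$ is ``successively enlarged to absorb the singular parts'' is precisely where all the work lies --- one has to run a closing-off argument in which the countable family of selected measures, the dominating measure, and the countable set of test functions are enlarged simultaneously and coherently, and one must then verify that the limit family still captures $\mu$ --- and none of this is carried out. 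Moreover, the resolution you sketch explicitly invokes the variation convergence $\sigma_k\to\mu$, i.e.\ the false step above, so even the intended strategy for filling the gap rests on an incorrect claim. As it stands, the implication from property (C) (plus countable type of all measures) to countable tightness is not established; for a complete argument you should consult the proof of Theorem 3.2 in \cite{FPR}.
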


Looking back at the proof of Theorem \ref{mainthm} it is easy to notice that  we in fact got the following (formally) stronger result.

\begin{theorem}\label{convexthm}
Let $P(K\times K)$ have convex countable tightness. Then every $\mu\in P(K)$ has countable type.
\end{theorem}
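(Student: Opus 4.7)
My plan is to observe that the proof of Theorem \ref{mainthm} already gives Theorem \ref{convexthm} with essentially no modification, because the two estimates that produced the contradiction are preserved under taking (weak$^*$-closed) convex hulls. I proceed exactly as before: assume for contradiction that some $\mu\in P(K)$ has uncountable type, reduce via Lemma \ref{homogeneous} to a homogeneous $\mu$ of type $\omega_1$, fix $0<\eps<1/16$, invoke Corollary \ref{indepcor} to produce the sequences $\langle B_\xi\rangle$, $\langle U_\xi\rangle$, apply Lemma \ref{mainlemma} to get $\nu_\xi\in P(\RR,\mu)$, extend each $\nu_\xi$ via Theorem \ref{rectextlemma} to $\widehat{\nu}_\xi\in P(K\times K)$, and let $\nu$ be any cluster point of $\langle \widehat{\nu}_\xi:\xi<\omega_1\rangle$ in the $weak^*$ topology.

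The goal is then to verify that for every countable $I\sub\omega_1$ we have $\nu\notin\ol{\mathrm{conv}}\{\widehat{\nu}_\eta:\eta\in I\}$; this contradicts convex countable tightness of $P(K\times K)$ by Theorem \ref{polthm}(2). Pick $\xi>\sup I$ and a closed $F\sub B_\xi$ with $\mu(B_\xi\setminus F)<\eps$. The same two-sided estimate from the proof of Theorem \ref{mainthm} yields
\[
\widehat{\nu}_\eta(F\times F)>\tfrac12-2\eps\quad\text{for every }\eta\in I,
\]
and for $\eta>\xi$ the marginal identities give $\widehat{\nu}_\eta(U_\xi\times U_\xi)=\mu(U_\xi)^2<(1/2+\eps)^2$, hence by Remark \ref{remark}(a) applied to the open set $U_\xi\times U_\xi$,
\[
\nu(F\times F)\le \nu(U_\xi\times U_\xi)\le (1/2+\eps)^2.
\]

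The only new observation needed is that the first inequality is stable under convex combinations. Indeed, if $\nu'=\sum_{k=1}^{n} c_k\widehat{\nu}_{\eta_k}$ is a finite convex combination of elements of $\{\widehat{\nu}_\eta:\eta\in I\}$, then by linearity $\nu'(F\times F)>\tfrac12-2\eps$. Since $F\times F$ is closed, Remark \ref{remark}(b) promotes this bound to the $weak^*$-closure, so $\nu''(F\times F)\ge \tfrac12-2\eps$ for every $\nu''\in\ol{\mathrm{conv}}\{\widehat{\nu}_\eta:\eta\in I\}$. Combined with $(1/2+\eps)^2<1/2-2\eps$ (which holds since $\eps<1/16$), this rules out $\nu$ belonging to that closed convex hull and completes the proof. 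I do not expect any genuine obstacle beyond this one linearity-plus-semicontinuity observation; the substantive content lies entirely in the construction already developed for Theorem \ref{mainthm}.
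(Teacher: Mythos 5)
Your proposal is correct and is exactly the argument the paper intends: the authors prove Theorem \ref{convexthm} simply by noting that the proof of Theorem \ref{mainthm} already excludes $\nu$ from $\ol{\mathrm{conv}}\{\widehat{\nu}_\eta:\eta\in I\}$, since the lower bound $\widehat{\nu}_\eta(F\times F)>\frac12-2\eps$ passes to convex combinations by linearity and to the $weak^*$-closure by Remark \ref{remark}(b), while the upper bound $\nu(F\times F)\le(1/2+\eps)^2$ is unchanged. Your write-up makes this observation explicit in the same way, so there is nothing to add.
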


Theorem \ref{convexthm} yields the positive solution to the following instance of Problem \ref{problem_C}.

\begin{corollary}
For any compact space $K$, $\tau(P(K\times K))=\omega$ if and only if $C(K\times K)$ has property (C).
\end{corollary}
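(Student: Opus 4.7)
The plan is to obtain this corollary by a short chain of implications using the results already assembled in the paper, with Theorem \ref{convexthm} doing all the real work.

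First I would dispatch the easy implication. Countable tightness trivially entails convex countable tightness, since any witnessing countable subset $\mathcal N$ already satisfies $\mu\in\overline{\mathcal N}\subseteq \overline{\rm conv}\,\mathcal N$. Pol's Theorem \ref{polthm}, applied to the compactum $K\times K$, then gives that $P(K\times K)$ has convex countable tightness if and only if $C(K\times K)$ has property (C). Hence $\tau(P(K\times K))=\omega$ already implies property (C) for $C(K\times K)$.

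For the converse, I would start from the assumption that $C(K\times K)$ has property (C) and run the argument backwards. By Theorem \ref{polthm} (applied to $K\times K$), $P(K\times K)$ has convex countable tightness. Feeding this into Theorem \ref{convexthm} yields that every $\mu\in P(K)$ has countable Maharam type. Now I invoke the observation stated at the beginning of Section \ref{section:5}: if all members of $P(K)$ have countable type, then so do all members of $P(K\times K)$. Thus every $\nu\in P(K\times K)$ has countable type.

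Finally I would apply Theorem \ref{fprnthm} to the compact space $K\times K$: under the hypothesis that every measure in $P(K\times K)$ has countable type, $\tau(P(K\times K))=\omega$ is equivalent to $C(K\times K)$ possessing property (C). Since the latter holds by assumption, we conclude $\tau(P(K\times K))=\omega$, completing the equivalence.

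There is no genuine obstacle here, as the corollary is essentially a repackaging of existing results. The only point worth double-checking is that Theorem \ref{fprnthm} is being applied to $K\times K$ rather than $K$, which is legitimate since it is stated for an arbitrary compact space and its hypothesis "every $\mu\in P(\cdot)$ has countable type" is precisely what the lifting observation provides for $K\times K$. All the analytic content has been absorbed into Theorem \ref{convexthm}, whose proof mirrors that of Theorem \ref{mainthm}: the cluster measure $\nu$ witnessing failure of countable tightness in fact lies outside $\overline{\rm conv}\,\{\widehat\nu_\eta:\eta\in I\}$, because the upper bound $\nu(F\times F)\le(1/2+\varepsilon)^2$ derived via Remark \ref{remark}(a) is convex and thus survives taking closed convex hulls, while the lower bound $\widehat\nu_\eta(F\times F)>1/2-2\varepsilon$ is uniform over $\eta\in I$ and hence passes to convex combinations.
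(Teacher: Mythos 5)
Your proof is correct, and the harder direction (property (C) for $C(K\times K)$ implies $\tau(P(K\times K))=\omega$) is exactly the paper's argument: Pol's Theorem \ref{polthm} applied to $K\times K$, then Theorem \ref{convexthm}, then the lifting of countable type from $P(K)$ to $P(K\times K)$, then Theorem \ref{fprnthm} applied to $K\times K$. Where you differ is the easy direction: the paper derives property (C) from $\tau(P(K\times K))=\omega$ by invoking Theorem \ref{mainthm} (every $\mu\in P(K)$, hence every $\nu\in P(K\times K)$, has countable type) and then Theorem \ref{fprnthm}, whereas you bypass both results, noting that countable tightness trivially implies convex countable tightness and that Theorem \ref{polthm} already identifies convex countable tightness of $P(K\times K)$ with property (C) of $C(K\times K)$. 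Your route is lighter and makes visible that this implication needs none of the paper's new machinery; the paper's route is equally valid but uses heavier tools than necessary for that half. One small imprecision in your closing aside on Theorem \ref{convexthm}: the upper bound $\nu(F\times F)\le(1/2+\eps)^2$ concerns the fixed cluster point $\nu$ and has nothing to survive a convex hull; what matters is that the lower bound $\widehat{\nu}_\eta(F\times F)>1/2-2\eps$, being uniform in $\eta\in I$, passes to convex combinations and then, via Remark \ref{remark}(b), to every element of $\overline{\rm conv}\,\{\widehat{\nu}_\eta:\eta\in I\}$, which $\nu$ therefore cannot belong to. Since the corollary only cites Theorem \ref{convexthm} as given, this slip does not affect your proof.
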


\begin{proof}
Assume that $\tau(P(K\times K))=\omega$. By Theorem \ref{mainthm}, every $\mu\in P(K)$ has countable type, hence every $\mu\in P(K\times K)$ has countable type. By Theorem \ref{fprnthm}, $C(K\times K)$ has property (C).

For the converse assume that $C(K\times K)$ has property (C). By Theorem \ref{polthm}, $P(K\times K)$ has convex countable tightness, which by Theorem \ref{convexthm} implies that every $\mu\in P(K\times K)$ has countable type. Using Theorem \ref{fprnthm} again we conclude  that $\tau(P(K\times K))=\omega$.
\end{proof}

In connection to Problem \ref{ctbltauproblem}, one can ask the following question on the property (C).

\begin{problem}\label{cproblem}
Let $C(K)$ have property (C). Does $C(K\times K)$ have also property (C)?
\end{problem}

Note that the converse holds true. Indeed, if $X$ is a Banach space with property (C) and $Y$ is its closed subspace, then $Y$ also has property (C). Since $C(K)$ embeds isometrically into $C(K\times K)$ by the operator $C(K)\ni f\mapsto f\circ\pi\in C(K\times K)$, where $\pi:K\times K\to K$ is a projection, $C(K)$ is isometric to a closed subspace of $C(K\times K)$.

\subsection{Topological dichotomy for $P(K\times K)$}

The particular case of Theorem 2.2 of Krupski and Plebanek \cite{KP} states that given a compact space $K$, $P(K)$ contains either a $\mathbb{G}_\delta$ point (i.e. a point of countable character in $P(K)$) or a measure of uncountable type. Thus Theorem \ref{mainthm} immediately implies the following.

\begin{corollary}
For every compact space $K$, either $P(K\times K)$ contains a $\mathbb{G}_\delta$ point or $P(K\times K)$ has uncountable tightness.
\end{corollary}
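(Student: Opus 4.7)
The plan is to apply the Krupski--Plebanek dichotomy (Theorem 2.2 of \cite{KP}) not to $K$ itself, but to the compact space $K\times K$. That dichotomy, quoted in the paragraph preceding the corollary, says that for any compact space $L$, the space $P(L)$ either contains a $\mathbb{G}_\delta$ point or carries some measure of uncountable Maharam type. Setting $L=K\times K$ we get: either $P(K\times K)$ has a $\mathbb{G}_\delta$ point, or there exists $\nu\in P(K\times K)$ of uncountable type. In the first case we are already done, so the task reduces to handling the second case, namely deducing that $\tau(P(K\times K))\ge\omega_1$ whenever some $\nu\in P(K\times K)$ has uncountable type.

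The cleanest way is by contraposition: I would assume that $\tau(P(K\times K))=\omega$ and derive that every $\nu\in P(K\times K)$ has countable type. By Theorem \ref{mainthm} applied to $K$, countable tightness of $P(K\times K)$ implies that every $\mu\in P(K)$ has countable type. Then I would invoke the elementary observation recorded at the start of Section \ref{section:5} (the one noting that if all measures on $K$ have countable type, so do all measures on $K\times K$), to upgrade this to: every $\nu\in P(K\times K)$ has countable type. Combining this with the dichotomy applied to $K\times K$ rules out the second alternative, forcing $P(K\times K)$ to contain a $\mathbb{G}_\delta$ point.

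Thus the two alternatives of the corollary are precisely the two alternatives of the Krupski--Plebanek dichotomy applied to $K\times K$, with the ``measure of uncountable type'' alternative translated, via Theorem \ref{mainthm} plus the trivial product observation, into ``$P(K\times K)$ has uncountable tightness.'' There is no genuine obstacle here; the only thing to be careful about is to apply Theorem \ref{mainthm} with the right space in the role of $K$ (so that the hypothesis is about $P(K\times K)$, which is what we have control over) and to cite the stability of countable Maharam type under products so that the conclusion matches the hypothesis of the dichotomy.
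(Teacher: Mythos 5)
Your argument is correct and coincides with the paper's intended proof: the paper also applies the Krupski--Plebanek dichotomy to $K\times K$ and then, via Theorem \ref{mainthm} together with the remark that countable type of all measures on $K$ passes to $K\times K$, rules out the uncountable-type alternative when $\tau(P(K\times K))=\omega$. No gap; the contrapositive formulation you give is exactly how "Theorem \ref{mainthm} immediately implies" the corollary.
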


Recall that  a measure $\mu\in P(K)$ is  countably deteremined (CD)  if there is a countable family $\FF$ of closed subsets of $K$ such that
$\mu(U)=\sup\{\mu(F): F\sub U, F\in\FF\}$ for every open $U\sub K$.
Moreover, $\mu$ is strongly countably determined (SCD) if one can choose such a family $\FF$ consisting of closed $\mathbb{G}_\delta$ sets;
  see \cite{Po82} and \cite{KP} for basic properties of CD and SCD measures and further references. For every $\mu\in P(K)$ we have the following implications
\[\mu \mbox{ is SCD } \Rightarrow \mu \mbox{ is CD}\Rightarrow \mu\mbox{ has countable type}.\]
A measure $\mu\in P(K)$ is strongly countably determined if and only if
$\mu$ is a $\mathbb{G}_\delta$ point of $P(K)$.  Thus the statement  `every $\mu\in P(K)$ is strongly countably determined' is equivalent to saying that $P(K)$ is first-countable.
In the light of our main result, the following problem seems to be natural.

\begin{problem}
Suppose that $P(K)$ or $P(K^\omega)$ is a Fr\'echet-Urysohn space. Is every $\mu\in P(K)$ countably determined?
\end{problem}

It is not known whether every measure on a Rosenthal compactum is countably determined --- see
 Marciszewski and Plebanek  \cite{MP12} for  a partial positive solution.

\end{document}